\numberwithin{equation}{section}
\numberwithin{figure}{section}
\theoremstyle{plain}
\newtheorem{thm}{\protect\theoremname}
\theoremstyle{definition}
\newtheorem{defn}[thm]{\protect\definitionname}
\newenvironment{proof}[1][\protect\proofname]{\par
\normalfont\topsep6\p@\@plus6\p@\relax
\trivlist
\itemindent\parindent
\item[\hskip\labelsep\scshape #1]\ignorespaces
}{%
\endtrivlist\@endpefalse
}
\providecommand{\proofname}{Proof}
\theoremstyle{plain}
\theoremstyle{plain}
\newtheorem{lem}[thm]{\protect\lemmaname}
\theoremstyle{plain}
\newtheorem{cor}[thm]{\protect\corollaryname}
\title[\hfil Robustesse de la stabilité globale pour les syst\`emes perturbés] 
{Robustesse de la stabilité globale asymptotique des équilibres pour les syst\`emes dynamiques dissipatifs perturbés}
\author[Mohammed Amine Hamra]
{Mohammed Amine Hamra}
\thanks{M. Amine Hamra \newline Laboratoire Syst\`emes Dynamiques et Applications, Université de Tlemcen, Algérie.\newline \emph{E-mail address} : {\color{blue}{ammathine@gmail.com}}}
\subjclass[2010]{34D23, 34D10, 34D15} 
\keywords{ Stabilité asymptotique; Attracteur global; Perturbation. }
\date{06-01-2015}
   \providecommand{\fg}{\ifdim\lastskip>\z@\unskip\fi~\frqq}%
  \providecommand{\corollaryname}{Corollaire}
  \providecommand{\definitionname}{Définition}
  \providecommand{\lemmaname}{Lemme}
\providecommand{\theoremname}{Théorème}
\begin{document}

\begin{abstract}
Nous présentons dans ce papier des résultats théoriques pour la théorie
des perturbations régulières des systèmes dynamiques dissipatifs.
Plus précisément, nous voulons démontrer que la stabilité d'un équilibre
globalement asymptotiquement stable, persiste sous de petites perturbations
du champ de vecteurs. 
\end{abstract}

\maketitle
\noindent Considérons le système dynamique suivant
\[
\varDelta_{\epsilon}:\,\stackrel{.}{x}\,=f(x(t),\epsilon),
\]
où $f\,\,:\,\,(\varOmega\subset\mathbb{R}^{n})\times[-\epsilon_{0},\epsilon_{0}]\rightarrow\mathbb{R}^{n}$
sera supposée continûment différentiable par rapport à $x$; et continue
en $\epsilon$, elle vérifie donc les hypothèses du théorème de Cauchy-Lipschitz
qui assure l'existence et l\textquoteright unicité d'une solution
maximale $x_{\epsilon}(x_{0},t)$ de $\varDelta_{\epsilon}$ vérifiant
$x_{\epsilon}(x_{0},0)=x_{0}$. Supposons que le système non perturbé
\[
\varDelta_{0}:\,\stackrel{.}{x}\,=f(x(t),0)
\]
admet un point d\textquoteright équilibre $x^{\ast}$ globalement
asymptotiquement stable. Sous des hypothèses de dissipativité uniforme
par rapport au paramètre de perturbation, le théorème \ref{thm:globalstab}
montre que l'équilibre $x^{\ast}$ persiste sous de petites perturbations
du champ de vecteurs $\varDelta_{0}$, plus exactement, pour $\epsilon$
suffisamment petit, il existe un équilibre $x_{\epsilon}^{\ast}$
globalement asymptotiquement stable et proche de $x^{\ast}$.

Rappelons quelques définitions et théorèmes de base des systèmes dynamiques
utiles pour la suite.
\begin{defn}
Un champ de vecteurs linéaire $L\in\mathcal{L}(\mathbb{R}^{n})$ est
hyperbolique si aucune valeur propre ne rencontre l\textquoteright axe
imaginaire. L'indice de $L$ est le nombre de ses valeurs propres
à partie réelle négative.
\end{defn}
\begin{lem}[{\cite[Proposition 2.18]{Palis1982}}]
\label{lemme index}Si $L\in\mathcal{L}(\mathbb{R}^{n})$ est un
champ de vecteurs hyperbolique alors il existe un voisinage $\mathcal{V}\subset\mathcal{L}(\mathbb{R}^{n})$
de $L$ tel que tous les $T\in\mathcal{V}$ ont le même indice que
$L$.
\end{lem}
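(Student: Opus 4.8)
Le plan est d'exploiter la d\'ependance continue du spectre d'une matrice par rapport \`a ses coefficients. Comme $L$ est hyperbolique, son spectre $\sigma(L)$ est fini et disjoint de l'axe imaginaire, donc
\[
\delta:=\min\bigl\{\,|\mathrm{Re}\,\lambda|\ :\ \lambda\in\sigma(L)\,\bigr\}>0.
\]
Notons $k$ l'indice de $L$, c'est-\`a-dire le nombre de valeurs propres de $L$, compt\'ees avec leur multiplicit\'e, de partie r\'eelle strictement n\'egative.

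Premi\`erement, je rappellerais que les coefficients du polyn\^ome caract\'eristique de $T$ sont des fonctions polynomiales des entr\'ees de $T$, et que les racines d'un polyn\^ome unitaire d\'ependent contin\^ument de ses coefficients, au sens suivant~: \`a toute racine de multiplicit\'e $m$ d'un polyn\^ome unitaire correspondent exactement $m$ racines, compt\'ees avec leur multiplicit\'e, du polyn\^ome perturb\'e, et celles-ci sont arbitrairement proches de la racine initiale d\`es que la perturbation des coefficients est assez petite (fait que l'on peut \'etablir par le th\'eor\`eme de Rouch\'e). On en d\'eduit l'existence d'un voisinage $\mathcal{V}$ de $L$ dans $\mathcal{L}(\mathbb{R}^{n})$ tel que, pour tout $T\in\mathcal{V}$, chacune des $n$ valeurs propres de $T$ (compt\'ees avec leur multiplicit\'e) appartienne \`a la boule de rayon $\delta/2$ centr\'ee en une valeur propre de $L$, cette r\'epartition respectant les multiplicit\'es.

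Deuxi\`emement, pour un tel $T$, si $\mu$ est une valeur propre de $T$ et $\lambda$ la valeur propre de $L$ qui lui est associ\'ee, alors $|\mathrm{Re}\,\mu-\mathrm{Re}\,\lambda|\le|\mu-\lambda|<\delta/2$ tandis que $|\mathrm{Re}\,\lambda|\ge\delta$\,; donc $\mathrm{Re}\,\mu\ne0$ (de sorte que $T$ est encore hyperbolique) et $\mathrm{Re}\,\mu$ a le m\^eme signe que $\mathrm{Re}\,\lambda$. La correspondance \'etant compatible avec les multiplicit\'es, le nombre de valeurs propres de $T$ de partie r\'eelle n\'egative \'egale celui de $L$, soit $k$\,; l'indice est donc constant sur $\mathcal{V}$.

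Une variante, sans doute plus \'el\'egante, consiste \`a introduire le projecteur spectral $P(T)=\frac{1}{2\pi i}\oint_{\Gamma}(zI-T)^{-1}\,dz$, o\`u $\Gamma$ est un lacet trac\'e dans le demi-plan $\{\mathrm{Re}\,z<0\}$ entourant les valeurs propres de $L$ de partie r\'eelle n\'egative et elles seules~: pour $T$ assez proche de $L$, $\Gamma$ demeure dans l'ensemble r\'esolvant de $T$, l'application $T\mapsto P(T)$ est continue, et $\dim\mathrm{Im}\,P(T)=\mathrm{tr}\,P(T)$ est une fonction \`a valeurs enti\`eres variant contin\^ument, donc localement constante et \'egale \`a $k$. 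Dans les deux approches, le seul point d\'elicat est la justification rigoureuse de la continuit\'e invoqu\'ee (du spectre, ou de la r\'esolvante et du projecteur spectral)\,; une fois celle-ci \'etablie, la conclusion se r\'eduit \`a un argument de signe, respectivement de constance locale d'une fonction \`a valeurs enti\`eres.
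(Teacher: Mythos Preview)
Your argument is correct and complete. Both approaches you outline---the direct one via continuity of the roots of the characteristic polynomial (Rouch\'e) and the variant via the Riesz spectral projector---are standard and valid ways to show that hyperbolicity and the index are open conditions in $\mathcal{L}(\mathbb{R}^{n})$.

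As for comparison with the paper: there is nothing to compare. The paper does not prove this lemma; it simply cites it as Proposition~2.18 of Palis--de~Melo and uses it as a black box. Your proposal therefore supplies more than the paper does. For what it is worth, the argument in the cited reference is essentially your first approach (continuity of eigenvalues as functions of the matrix entries), so your write-up is in line with the original source.
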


\begin{defn}
Soit $\varOmega$ un ouvert de $\mathbb{R}^{n}$. Un ensemble des
conditions initiales $K_{0}\subset\varOmega$ est dit absorbant pour
le système dynamique $\stackrel{.}{y}=f(y)$ \textit{\emph{défini
sur $\varOmega$ }}si pour toute partie bornée $K$ de $\varOmega$
il existe un temps fini $t_{0}=t_{0}(K)$ tel que $y(t,K)\subset K_{0}$
pour tout $t{\rm \geqslant}t_{0}$.
\end{defn}

\begin{defn}
\textit{\emph{Un ensemble }}\emph{${\rm \mathfrak{A}}$ }\textit{\emph{est
un attracteur}}\emph{ }\textit{\emph{global d'un }}système dynamique\emph{
$\stackrel{.}{y}=f(y)$ }\textit{\emph{défini sur $\varOmega$ si}}\emph{
${\rm \mathfrak{A}}$ }\textit{\emph{est une partie compacte, invariante,
et }}\emph{$\omega(B)\subset{\rm \mathfrak{A}}$ }\textit{\emph{pour
tout ensemble borné $B$. }}
\end{defn}
De cette définition, il est clair que l'attracteur global contient
tous les ensembles limites, et si un attracteur global existe, il
est unique.
\begin{thm}[{\cite[Theorem 1.1]{Temam1988}}]
\label{thm:existence attracteur}Soit $\stackrel{.}{y}=f(y)$ un
système dynamique défini sur un ouvert $\varOmega\subset\mathbb{R}^{n}.$
On suppose que ce système admet un ensemble absorbant borné $K_{0}\subset\varOmega$,
alors $\omega(K_{0})$ est l'unique attracteur global dans $\varOmega$.
\end{thm}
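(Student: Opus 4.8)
Le plan est de d\'efinir $\mathfrak{A}:=\omega(K_{0})=\bigcap_{s\geqslant0}\overline{\bigcup_{t\geqslant s}y(t,K_{0})}$ et de v\'erifier dans l'ordre que cet ensemble est compact non vide, invariant, qu'il attire toute partie born\'ee, et enfin qu'il est le seul attracteur global. Tout repose sur l'observation pr\'eliminaire que l'orbite positive $\gamma^{+}(K_{0}):=\bigcup_{t\geqslant0}y(t,K_{0})$ est pr\'ecompacte : en appliquant la d\'efinition de l'ensemble absorbant \`a la partie born\'ee $K=\overline{K_{0}}$, on obtient un temps $t_{0}$ tel que $y(t,K_{0})\subset K_{0}$ pour tout $t\geqslant t_{0}$ ; comme de plus le flot envoie le compact $[0,t_{0}]\times\overline{K_{0}}$ sur une partie born\'ee, l'orbite $\gamma^{+}(K_{0})$ est born\'ee et $\overline{\gamma^{+}(K_{0})}$ est un compact de $\varOmega$. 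D\`es lors les ensembles $X_{s}:=\overline{\bigcup_{t\geqslant s}y(t,K_{0})}$ forment une famille d\'ecroissante de compacts non vides, et $\mathfrak{A}=\bigcap_{s\geqslant0}X_{s}$ est compact, non vide, inclus dans $\overline{K_{0}}$.

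Pour l'invariance : si $x\in\mathfrak{A}$, il existe $x_{n}\in K_{0}$ et $t_{n}\to+\infty$ tels que $y(t_{n},x_{n})\to x$ ; pour $t\geqslant0$ fix\'e, la continuit\'e du flot donne $y(t+t_{n},x_{n})=y(t,y(t_{n},x_{n}))\to y(t,x)$, d'o\`u $y(t,x)\in\mathfrak{A}$, soit $y(t,\mathfrak{A})\subset\mathfrak{A}$ ; r\'eciproquement, pour $t_{n}\geqslant t$ les points $y(t_{n}-t,x_{n})$ restent dans le compact $\overline{\gamma^{+}(K_{0})}$, donc une sous-suite converge vers un point $z$ qui appartient \`a $\mathfrak{A}$ et v\'erifie $y(t,z)=x$ par continuit\'e, ce qui donne $\mathfrak{A}\subset y(t,\mathfrak{A})$. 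Pour l'attraction, on \'etablit d'abord que $\mathfrak{A}$ attire $K_{0}$, c'est-\`a-dire $\mathrm{dist}(y(s,K_{0}),\mathfrak{A})\to0$ quand $s\to+\infty$ : sinon il existerait $\delta>0$, $s_{n}\to+\infty$ et $x_{n}\in K_{0}$ avec $\mathrm{dist}(y(s_{n},x_{n}),\mathfrak{A})\geqslant\delta$, mais la suite $(y(s_{n},x_{n}))$, contenue dans le compact $\overline{\gamma^{+}(K_{0})}$, poss\`ede une valeur d'adh\'erence qui, par construction de $\mathfrak{A}$, lui appartient, ce qui est absurde. Puis, pour une partie born\'ee $B\subset\varOmega$ quelconque, la propri\'et\'e absorbante fournit $t_{1}$ avec $y(t_{1},B)\subset K_{0}$, et pour $t\geqslant t_{1}$ la propri\'et\'e de semi-groupe donne
\[
\mathrm{dist}(y(t,B),\mathfrak{A})=\mathrm{dist}\bigl(y(t-t_{1},y(t_{1},B)),\mathfrak{A}\bigr)\leqslant\sup_{z\in K_{0}}\mathrm{dist}(y(t-t_{1},z),\mathfrak{A}),
\]
et ce majorant tend vers $0$ lorsque $t\to+\infty$ d'apr\`es ce qui pr\'ec\`ede ; en particulier $\omega(B)\subset\mathfrak{A}$. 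Ainsi $\mathfrak{A}$ est un attracteur global.

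Pour l'unicit\'e, soit $\mathfrak{B}$ un autre attracteur global : \'etant born\'e et invariant, il v\'erifie $\mathrm{dist}(\mathfrak{B},\mathfrak{A})=\mathrm{dist}(y(t,\mathfrak{B}),\mathfrak{A})\to0$, d'o\`u $\mathfrak{B}\subset\mathfrak{A}$ par compacit\'e ; sym\'etriquement, $\mathfrak{A}$ \'etant born\'e et invariant, $\mathrm{dist}(\mathfrak{A},\mathfrak{B})=\mathrm{dist}(y(t,\mathfrak{A}),\mathfrak{B})\to0$, donc $\mathfrak{A}\subset\mathfrak{B}$. Par cons\'equent $\mathfrak{A}=\mathfrak{B}$, et $\omega(K_{0})$ est bien l'unique attracteur global dans $\varOmega$.

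La principale difficult\'e se situe dans la seconde moiti\'e de l'argument d'invariance et dans la propri\'et\'e d'attraction de $K_{0}$ par $\mathfrak{A}$ : c'est l\`a qu'intervient de fa\c{c}on essentielle la pr\'ecompacit\'e de $\gamma^{+}(K_{0})$ (et donc la compacit\'e locale de $\mathbb{R}^{n}$), qui permet de passer du fait, de nature purement ensembliste, que $\mathfrak{A}$ contient toutes les valeurs d'adh\'erence des orbites issues de $K_{0}$, \`a la convergence uniforme $\mathrm{dist}(y(t,K_{0}),\mathfrak{A})\to0$. Il convient aussi de s'assurer en amont que les solutions issues de $\overline{K_{0}}$ ne peuvent exploser en temps fini ni quitter $\varOmega$, ce que garantissent les hypoth\`eses de Cauchy-Lipschitz jointes au fait que leur orbite demeure dans le compact $\overline{\gamma^{+}(K_{0})}\subset\varOmega$.
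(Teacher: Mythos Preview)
Le papier ne d\'emontre pas ce th\'eor\`eme : il est simplement cit\'e de \cite[Theorem~1.1]{Temam1988} et utilis\'e comme un r\'esultat admis dans la preuve du th\'eor\`eme~\ref{thm:globalstab}. Il n'y a donc pas de preuve du papier \`a laquelle comparer votre proposition.

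Cela \'etant, votre argument suit fid\`element la d\'emarche classique de Temam (pr\'ecompacit\'e de l'orbite positive de $K_{0}$, puis compacit\'e, invariance, attraction, unicit\'e) et il est pour l'essentiel correct. Un seul point m\'erite d'\^etre signal\'e : vous invoquez le compact $[0,t_{0}]\times\overline{K_{0}}$ pour borner l'orbite sur $[0,t_{0}]$, mais rien dans l'\'enonc\'e ne garantit que $\overline{K_{0}}\subset\varOmega$ (puisque $\varOmega$ est ouvert), ni a priori que les solutions issues de $\overline{K_{0}}$ existent jusqu'au temps $t_{0}$. On s'en sort en appliquant la propri\'et\'e absorbante \`a $K=K_{0}$ lui-m\^eme, puis en bornant $\bigcup_{0\leqslant t\leqslant t_{0}}y(t,K_{0})$ par un argument de continuation des solutions combin\'e au fait que celles-ci finissent dans $K_{0}$ ; c'est pr\'ecis\'ement ce que vous \'evoquez dans votre dernier paragraphe, mais il conviendrait de le placer en amont et de l'expliciter, car c'est l\`a que se cache l'hypoth\`ese implicite $\overline{K_{0}}\subset\varOmega$ (ou une hypoth\`ese \'equivalente) sans laquelle l'\'enonc\'e peut \^etre mis en d\'efaut.
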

On désigne par $d(x,\mathbf{A})=\inf\{d(x,y)$: $y\in\mathbf{A}\}$
la distance de $x\in\mathbb{R}^{n}$ à l\textquoteright ensemble $\mathbf{A}\subset\mathbb{R}^{n}$.
$B(x,r),\,\bar{B}(x,r)$ représentent respectivement la boule ouverte
et la boule fermée de centre $x$ et de rayon $r$ .

Nous pouvons maintenant énoncer le résultat suivant:
\begin{thm}
\label{thm:globalstab}Supposons que le système $\varDelta_{\epsilon}$
est dissipatif uniformément par rapport à $\epsilon$. Supposons que
le système non perturbé $\varDelta_{0}$ admet un point d\textquoteright équilibre
$x^{\ast}$ globalement asymptotiquement stable dans $\varOmega$
et localement exponentiellement stable. Alors il existe une constante
positive $\epsilon^{*}\leqslant\epsilon_{0}$ assez proche de $0$
telle que $\varDelta_{\epsilon}$ possède un équilibre unique $x^{\ast}(\epsilon)$,
globalement asymptotiquement stable dans $\varOmega$ pour tout $\epsilon\in[-\epsilon^{*},\epsilon^{*}]$.
\end{thm}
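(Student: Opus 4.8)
La démarche que je suivrais se déroule en trois temps : deux locaux, puis un global. \emph{(i) Construction de l'équilibre perturbé.} La stabilité exponentielle locale d'un équilibre d'un champ de classe $C^{1}$ équivaut au caractère de Hurwitz de la jacobienne, donc $A:=D_{x}f(x^{\ast},0)$ est de Hurwitz, a fortiori hyperbolique d'indice $n$ et inversible. J'écrirais l'équation $f(x,\epsilon)=0$ sous forme de point fixe
\[
x=\Psi(x,\epsilon):=x^{\ast}-A^{-1}\bigl(f(x,\epsilon)-A(x-x^{\ast})\bigr),
\]
et, puisque $D_{x}\Psi(x,\epsilon)=-A^{-1}\bigl(D_{x}f(x,\epsilon)-A\bigr)\to0$ et $\Psi(x^{\ast},\epsilon)-x^{\ast}=-A^{-1}f(x^{\ast},\epsilon)\to0$ lorsque $(x,\epsilon)\to(x^{\ast},0)$, l'application $\Psi(\cdot,\epsilon)$ serait, pour $|\epsilon|$ assez petit, une contraction d'une boule fermée fixe $\bar{B}(x^{\ast},r)\subset\varOmega$ dans elle-même. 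Le théorème de point fixe de Banach donnerait un zéro unique $x^{\ast}(\epsilon)$ de $f(\cdot,\epsilon)$ dans cette boule, dépendant continûment de $\epsilon$ et vérifiant $x^{\ast}(\epsilon)\to x^{\ast}$.

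\emph{(ii) Stabilité locale uniforme en $\epsilon$.} Par continuité de $D_{x}f$ en $(x,\epsilon)$, on a $D_{x}f(x^{\ast}(\epsilon),\epsilon)\to A$ ; le lemme \ref{lemme index} garantirait alors que, pour $|\epsilon|$ petit, cette jacobienne conserve l'indice $n$, reste donc de Hurwitz, de spectre contenu dans un demi-plan $\{\mathrm{Re}\,z\leqslant-c\}$ avec $c>0$ \emph{indépendant} de $\epsilon$. En résolvant l'équation de Lyapunov associée, j'obtiendrais une forme quadratique $V_{\epsilon}(x)=(x-x^{\ast}(\epsilon))^{\top}P_{\epsilon}(x-x^{\ast}(\epsilon))$ avec $P_{\epsilon}$ uniformément bornée et définie positive, puis, grâce à la continuité uniforme de $D_{x}f$ près de $(x^{\ast},0)$, deux rayons \emph{fixes} $0<\rho_{1}<\rho_{0}$ tels que toute trajectoire de $\varDelta_{\epsilon}$ entrant dans $B(x^{\ast},\rho_{1})$ demeure dans $B(x^{\ast},\rho_{0})$ et y converge exponentiellement vers $x^{\ast}(\epsilon)$. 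En particulier $x^{\ast}(\epsilon)$ est localement exponentiellement stable.

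\emph{(iii) Globalité.} Soit $K_{0}\subset\varOmega$ un ensemble absorbant compact commun à tous les $\varDelta_{\epsilon}$, fourni par la dissipativité uniforme (en vertu du théorème \ref{thm:existence attracteur}, chaque $\varDelta_{\epsilon}$ a alors un attracteur global inclus dans $K_{0}$). La stabilité asymptotique globale de $x^{\ast}$ pour $\varDelta_{0}$ entraîne la convergence vers $x^{\ast}$, uniforme par rapport à la donnée initiale dans le compact $K_{0}$ (conséquence classique de l'attractivité globale jointe à la stabilité au sens de Lyapunov et à la dépendance continue des solutions en la donnée initiale) : il existerait $T>0$ tel que la solution $x_{0}(y,\cdot)$ de $\varDelta_{0}$ issue de $y$ vérifie $x_{0}(y,t)\in B(x^{\ast},\rho_{1}/2)$ pour tout $t\geqslant T$ et tout $y\in K_{0}$. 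D'autre part, la dépendance continue des solutions en $\epsilon$ sur l'intervalle \emph{fini} $[0,T]$ et le compact $K_{0}$ (lemme de Gronwall, $f$ étant localement lipschitzienne en $x$ uniformément en $\epsilon$) donnerait, pour $|\epsilon|$ assez petit, $\|x_{\epsilon}(y,t)-x_{0}(y,t)\|<\rho_{1}/2$ pour tous $y\in K_{0}$, $t\in[0,T]$, d'où $x_{\epsilon}(y,T)\in B(x^{\ast},\rho_{1})$ pour tout $y\in K_{0}$. Couplée à (ii), cette inclusion donne $x_{\epsilon}(y,t)\to x^{\ast}(\epsilon)$ pour tout $y\in K_{0}$ ; tout singleton étant borné, l'absorption ramène en temps fini toute condition initiale de $\varOmega$ dans $K_{0}$, si bien que $x^{\ast}(\epsilon)$ attire $\varOmega$ tout entier. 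Joint à la stabilité locale de (ii), ceci montre que $x^{\ast}(\epsilon)$ est globalement asymptotiquement stable dans $\varOmega$ ; l'unicité suit, tout autre équilibre $\tilde{x}$ devant vérifier $x_{\epsilon}(\tilde{x},t)=\tilde{x}\to x^{\ast}(\epsilon)$. On prendra pour $\epsilon^{*}$ le plus petit des seuils obtenus (et $\epsilon^{*}\leqslant\epsilon_{0}$).

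L'obstacle principal est l'étape (iii), en raison de l'ordre dans lequel les trois uniformités doivent se combiner : la dissipativité \emph{uniforme} confine toutes les trajectoires dans le compact fixe $K_{0}$, mais la convergence vers l'équilibre uniforme sur $K_{0}$ n'est acquise que pour $\varDelta_{0}$, tandis que la dépendance continue en $\epsilon$ ne vaut que sur un horizon de temps fini. Il faut donc d'abord exploiter $\varDelta_{0}$ pour faire entrer, en un temps uniforme $T$, toutes les trajectoires issues de $K_{0}$ dans le bassin local $B(x^{\ast},\rho_{1})$ ; cela suppose aussi que ce bassin ne dégénère pas lorsque $\epsilon\to0$, propriété assurée en (ii) par la robustesse du caractère de Hurwitz (lemme \ref{lemme index}) et le contrôle uniforme de la solution de l'équation de Lyapunov.
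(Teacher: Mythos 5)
Votre preuve est correcte et suit la m\^eme architecture globale que celle du papier (persistance de l'\'equilibre, bassin local uniforme en $\epsilon$, puis transfert de la stabilit\'e globale de $\varDelta_{0}$ via la d\'ependance continue sur un horizon fini et la dissipativit\'e uniforme), mais les outils techniques diff\`erent \`a chaque \'etape, et pas au hasard. Pour l'\'equilibre perturb\'e, vous utilisez le point fixe de Banach l\`a o\`u le papier invoque le th\'eor\`eme des fonctions implicites : c'est \'equivalent. Pour le bassin local, le papier passe par Hartman--Grobman et le lemme \ref{lemme index}, puis affirme que $\inf_{\epsilon}r_{\epsilon}>0$ en se contentant d'invoquer \og l'attractivit\'e de l'\'equilibre \fg{} --- c'est le point le plus fragile de sa d\'emonstration, car Hartman--Grobman ne fournit aucun contr\^ole quantitatif du rayon de la bo\^ite de conjugaison. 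Votre construction par l'\'equation de Lyapunov, avec $P_{\epsilon}$ uniform\'ement born\'ee et d\'efinie positive gr\^ace \`a la borne spectrale uniforme $\mathrm{Re}\,z\leqslant-c$, est pr\'ecis\'ement l'argument qui rend rigoureuse l'existence des rayons fixes $\rho_{1}<\rho_{0}$ ; elle donne en prime la stabilit\'e au sens de Lyapunov (pas seulement l'attractivit\'e), ce qui est n\'ecessaire pour conclure \`a la stabilit\'e asymptotique \emph{globale}. Enfin, pour la partie globale, le papier fait transiter l'argument par l'attracteur $\mathcal{A}_{\epsilon}$ et son invariance (\'equation (\ref{eq:57})), tandis que vous suivez directement les trajectoires issues de l'ensemble absorbant $K_{0}$ jusqu'au temps $T$ ; les deux marchent, la v\^otre \'evitant le recours au th\'eor\`eme \ref{thm:existence attracteur}. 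Deux remarques mineures : votre affirmation d'un temps $T$ uniforme sur le compact $K_{0}$ pour $\varDelta_{0}$ est le m\^eme fait classique que l'\'equation (\ref{eq:10}) du papier (que celui-ci \'enonce d'ailleurs abusivement sur $\varOmega$ tout entier au lieu de $\mathcal{K}$) ; et, comme le papier, votre \'etape (i)--(ii) requiert implicitement la continuit\'e \emph{conjointe} de $D_{x}f$ en $(x,\epsilon)$, l\'eg\`erement plus forte que les hypoth\`eses de r\'egularit\'e annonc\'ees en t\^ete d'article.
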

\begin{proof}
Puisque le système $\varDelta_{\epsilon}$ est uniformément dissipatif
par rapport à $\epsilon$, il possède un ensemble compact $\mathcal{K\subset}\varOmega$
uniformément absorbant. D\textquoteright après le Théorème \ref{thm:existence attracteur},
l'ensemble oméga limite $\omega_{\epsilon}(\mathcal{K})=\mathcal{A}_{\epsilon}\subset\mathcal{K}$
est l'unique attracteur global dans $\varOmega$ pour tout $\epsilon\in[-\epsilon_{0},\epsilon_{0}]$,
où $\omega_{\epsilon}(.)$ désigne les ensembles oméga limites relatifs
à $\varDelta_{\epsilon}$.

\noindent Soient $\eta>0$ fixé, $\bar{B}(x^{\ast},\eta/2)$ la boule
fermée de centre $x^{\ast}$ et de rayon $\eta/2$ et soit $x(x_{0},t)$
la solution de l\textquoteright équation $\varDelta_{0}$ qui passe
par $x_{0}$ au temps $t=0$. Le point d'équilibre $x^{\ast}$ est
globalement asymptotiquement stable pour $\varDelta_{0}$ dans $\varOmega$,
donc il existe un temps fini $T_{\eta}$ tel 

\noindent 
\begin{equation}
x(\varOmega,t)\subset\bar{B}(x^{\ast},\eta/2),\,\,\,\forall t\geqslant T_{\eta}.\label{eq:10}
\end{equation}
D\textquoteright autre part, par la dépendance continue de la solution
par rapport aux conditions initiales et aux paramètres (i.e la solution
$x_{\epsilon}(x_{0},t)$ est uniformément continue sur $\varOmega\times[-\epsilon_{0},\epsilon_{0}]$)
et par le fait que $T_{\eta}$ est indépendant de $\epsilon$, il
existe $\hat{\epsilon}(\eta)$ tel que pour tout $\epsilon\in[-\hat{\epsilon},\hat{\epsilon}]$

\noindent 
\begin{equation}
d(x_{\epsilon}(x_{0},T_{\eta}),\thinspace x(x_{0},T_{\eta}))<\eta/2,\,\,\forall x_{0}\in\mathcal{K}.\label{eq:11}
\end{equation}
En utilisant l'inégalité triangulaire, avec (\ref{eq:10}) et (\ref{eq:11}),
nous obtenons alors
\begin{alignat*}{1}
d(x_{\epsilon}(x_{0},T_{\eta}),\thinspace x^{\ast})\leqslant & \,d(x_{\epsilon}(x_{0},T_{\eta}),\thinspace x(x_{0},T_{\eta}))\\
 & +d(x(x_{0},T_{\eta}),\thinspace x^{\ast})<\eta.
\end{alignat*}
Donc,
\begin{equation}
x_{\epsilon}(\mathcal{A}_{\epsilon},T_{\eta})=\mathcal{A}_{\epsilon}\subset B(x^{\ast},\eta),\label{eq:57}
\end{equation}

\noindent pour tout $\epsilon\in[-\hat{\epsilon},\hat{\epsilon}]$,
puisque $\mathcal{A}_{\epsilon}$ est invariant.

\noindent D'autre part, puisque l'équilibre de $\varDelta_{0}$ est
localement exponentiellement stable, la matrice Jacobienne $\mathrm{D}f(x^{\ast},0)$
de $\varDelta_{0}$ au point d'équilibre $x^{\ast}$ est Hurwitz,
et donc, son rang est maximal. D'après le théorème des fonctions implicites
il existe, un voisinage ouvert $U$ de $x^{\ast}$, une constante
$\epsilon_{1}>0$ et une application $\epsilon\in[-\epsilon_{1},\epsilon_{1}]\mapsto x^{\ast}(\epsilon)\in U$
continue telle que $x^{\ast}(0)=x^{\ast}$ et $x^{\ast}(\epsilon)$
est un point d'équilibre pour $\varDelta_{\epsilon}$. Soit $J_{\epsilon}=\mathrm{D}f(x^{\ast}(\epsilon),\epsilon)$
la matrice Jacobienne de $\varDelta_{\epsilon}$ au point d'équilibre
$x^{\ast}(\epsilon)$. Le développement limité de $J_{\epsilon}$
au voisinage de $\epsilon=0$ nous donne
\[
J_{\epsilon}=J_{0}+O(\epsilon),
\]
où $J_{0}=\mathrm{D}f(x^{\ast}(0),0)=\mathrm{D}f(x^{\ast},0)$.

\noindent Comme le spectre de la matrice Jacobienne $J_{\epsilon}$
dépend continument de $\epsilon$, le Théorème des fonctions implicites
affirme qu'il existe $\epsilon_{2}>0$ tel que l'indice de $J_{\epsilon}$
est le même que l'indice de $J_{0}$ pour tout $\epsilon\in[-\epsilon_{2},\epsilon_{2}]$.
Donc, l'indice de l\textquoteright équilibre perturbé $x^{\ast}(\epsilon)$
est maximal. D\textquoteright après le théorème de Hartman-Grobman,
il existe un voisinage $B(x^{\ast}(\epsilon),r_{\epsilon})$ de $x^{\ast}(\epsilon)$
tel que toute courbe d'une solution de $\varDelta_{\epsilon}$ avec
$\left|\,\epsilon\,\right|\leqslant\min\{\epsilon_{0},\epsilon_{1},\epsilon_{2}\}$
partant de $B(x^{\ast}(\epsilon),r_{\epsilon})$ tend vers l'équilibre
hyperbolique $x^{\ast}(\epsilon)$. 

\noindent Par continuité de $x^{\ast}(\epsilon)$, il existe $\epsilon_{3}$
et une constante $r_{0}>0$, telle que 
\[
B(x^{\ast},r{}_{0})\subset\bigcap_{\left|\,\epsilon\,\right|<\epsilon_{3}}B(x^{\ast}(\epsilon),r_{\epsilon}),
\]

\noindent étant entendu que, d\textquoteright après l'attractivité
de l\textquoteright équilibre $x^{\ast}$, le minimum de $r_{\epsilon}$
est strictement positif. Par (\ref{eq:57}), il existe $\epsilon_{4}(r{}_{0})$
tel que pour tout $\epsilon\in[-\epsilon_{4},\epsilon_{4}]$, 
\[
\mathcal{A}_{\epsilon}\subset B(x^{\ast},r{}_{0}).
\]
Soit $\left|\,\epsilon\,\right|\leqslant\epsilon^{*}=\min\{\epsilon_{0},\epsilon_{1},\epsilon_{2},\epsilon_{3},\epsilon_{4}\}$.

Comme $\mathcal{A}_{\epsilon}\subset B(x^{\ast},r{}_{0})$, toute
trajectoire initialisée dans $\Omega$ entre dans $B(x^{\ast},r{}_{0})$
en temps fini. Mais il est clair que pour tout $\epsilon\in[-\epsilon^{*},\epsilon^{*}]$,
toute solution est dans une des boules $B(x^{\ast}(\epsilon),r_{\epsilon})$
et converge vers $x^{\ast}(\epsilon)$ lorsque $t$ tend vers l'infini.
ce qui achève la démonstration.
\end{proof}
Le corollaire suivant reprend les résultats du théorème dans le cas
des systèmes définis dans le cône positif avec une hypothèse supplémentaire
assurant que l'équilibre du problème perturbé ne tend pas vers le
bord quand $\epsilon$ tend vers zéro.
\begin{cor}
\label{cor:global positif}Supposons satisfaites les hypothèses du
théorème \emph{(\ref{thm:globalstab})} avec $\varOmega\subset\mathbb{R}_{+}^{n}$.
Si de plus, le système $\varDelta_{\epsilon}$ est uniformément persistant
et que l'équilibre $x^{\ast}$ du système non perturbé $\varDelta_{0}$
est \uline{positif,} alors il existe une constante positive $\epsilon^{*}\leqslant\epsilon_{0}$
assez proche de $0$ telle que $\varDelta_{\epsilon}$ possède un
équilibre unique $x^{\ast}(\epsilon)$, globalement asymptotiquement
stable dans $\varOmega$ qui soit positif, pour tout $\epsilon\in[-\epsilon^{*},\epsilon^{*}]$.
\end{cor}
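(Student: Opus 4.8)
The plan is to invoke Theorem~\ref{thm:globalstab} as a black box and then use the two extra hypotheses only to locate the perturbed equilibrium inside the open positive cone. Since, by assumption, all the hypotheses of Theorem~\ref{thm:globalstab} are in force, that theorem already furnishes a constant $\epsilon^{*}\leqslant\epsilon_{0}$ such that, for every $\epsilon\in[-\epsilon^{*},\epsilon^{*}]$, the system $\varDelta_{\epsilon}$ has a unique equilibrium $x^{\ast}(\epsilon)$ which is globally asymptotically stable in $\varOmega$ and satisfies $x^{\ast}(0)=x^{\ast}$. Uniqueness and global asymptotic stability are thus acquired verbatim; the only new content is the claim that $x^{\ast}(\epsilon)$ is \emph{positive}, i.e. that $\bigl(x^{\ast}(\epsilon)\bigr)_{i}>0$ for every $i$, equivalently that $x^{\ast}(\epsilon)$ stays at positive distance from $\partial\mathbb{R}_{+}^{n}$.

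The next step is to unpack uniform persistence into a quantitative bound insensitive to $\epsilon$. Uniform persistence of $\varDelta_{\epsilon}$ provides a single $\delta>0$, independent of $\epsilon\in[-\epsilon_{0},\epsilon_{0}]$, such that every trajectory $t\mapsto x_{\epsilon}(x_{0},t)$ issued from a point $x_{0}\in\varOmega$ with all coordinates strictly positive satisfies $\liminf_{t\to+\infty}\min_{1\leqslant i\leqslant n}\bigl(x_{\epsilon}(x_{0},t)\bigr)_{i}\geqslant\delta$. Such initial conditions exist, since $x^{\ast}$ itself lies in $\varOmega$ and is positive. Equivalently — and this is the form I would actually use — the global attractor $\mathcal{A}_{\epsilon}=\omega_{\epsilon}(\mathcal{K})$ of Theorem~\ref{thm:existence attracteur} is contained in the compact set $\{y:\ y_{i}\geqslant\delta\text{ for every }i\}$ of the open cone; and, because $x^{\ast}(\epsilon)$ is a globally asymptotically stable equilibrium, $\mathcal{A}_{\epsilon}$ is exactly the singleton $\{x^{\ast}(\epsilon)\}$.

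Putting the two together then completes the argument: fix $\epsilon\in[-\epsilon^{*},\epsilon^{*}]$ and a positive $x_{0}\in\varOmega$; global asymptotic stability gives $x_{\epsilon}(x_{0},t)\to x^{\ast}(\epsilon)$ as $t\to+\infty$, hence $\min_{i}\bigl(x_{\epsilon}(x_{0},t)\bigr)_{i}\to\min_{i}\bigl(x^{\ast}(\epsilon)\bigr)_{i}$, and the persistence inequality forces $\min_{i}\bigl(x^{\ast}(\epsilon)\bigr)_{i}\geqslant\delta>0$, so $x^{\ast}(\epsilon)$ is positive; moreover, since $\delta$ does not depend on $\epsilon$, no shrinking of $\epsilon^{*}$ beyond the value supplied by Theorem~\ref{thm:globalstab} is needed. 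The single delicate point, and the one I expect to be the real obstacle, is to make sure that the persistence constant $\delta$ is genuinely uniform in $\epsilon$: this must be read off carefully from the precise meaning of uniform persistence for the family $\varDelta_{\epsilon}$ (one $\delta$ valid for all $\epsilon$), exactly as uniform dissipativity produced a single absorbing compact $\mathcal{K}$ in the proof of Theorem~\ref{thm:globalstab}. With only persistence for each fixed $\epsilon$ separately, the argument would merely give positivity of $x^{\ast}(\epsilon)$ for $\epsilon$ near $0$, which in any case is recovered from the continuity of $\epsilon\mapsto x^{\ast}(\epsilon)$ at $\epsilon=0$ together with the positivity of $x^{\ast}$.
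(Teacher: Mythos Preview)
Your proposal is correct and is exactly the argument the paper intends: the corollary is stated without proof, with only the introductory sentence explaining that the extra hypothesis of uniform persistence is there ``assurant que l'\'equilibre du probl\`eme perturb\'e ne tend pas vers le bord quand $\epsilon$ tend vers z\'ero'', which is precisely what you establish by invoking Theorem~\ref{thm:globalstab} verbatim and then using persistence to keep $x^{\ast}(\epsilon)$ away from $\partial\mathbb{R}_{+}^{n}$. Your discussion of the uniformity of $\delta$ in $\epsilon$ and the fallback via continuity of $\epsilon\mapsto x^{\ast}(\epsilon)$ is a welcome clarification that the paper does not make explicit.
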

Nous voulons maintenant montrer une limite uniforme de la solution
d\textquoteright un système dynamique dissipatif sur un intervalle
infini.
\begin{cor}
\label{theo:3.6.2}Sous les hypothèses du théorème \textup{\ref{thm:globalstab}},
il existe $\epsilon^{**}\in[0,\epsilon_{0}]$ tel que pour tout $\eta>0$,
pour tout $\epsilon\in[-\epsilon^{**},\epsilon^{**}]$,
\[
d(x_{\epsilon}(x_{0},t),\thinspace x(x_{0},t))<\eta,
\]
 pour tout $t\geqslant0$.
\end{cor}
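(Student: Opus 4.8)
\emph{Plan de démonstration.} On fixe $\eta>0$ dès le départ (la constante $\epsilon^{**}$ dépendra de $\eta$). Le plan est de scinder $[0,+\infty)$ en un intervalle compact $[0,T]$ et une demi-droite $[T,+\infty)$ : sur $[0,T]$ on contrôlera l'écart $d(x_{\epsilon}(x_{0},t),x(x_{0},t))$ par dépendance continue des solutions en $\epsilon$, exactement comme pour (\ref{eq:11}) dans la preuve du Théorème \ref{thm:globalstab} ; sur $[T,+\infty)$ on montrera que les deux trajectoires restent confinées dans $B(x^{\ast},\eta/2)$. On dispose, via le Théorème \ref{thm:globalstab} et sa preuve, du compact uniform\'ement absorbant $\mathcal{K}\subset\varOmega$, de l'\'equilibre $x^{\ast}(\epsilon)$ globalement asymptotiquement stable pour $\varDelta_{\epsilon}$ d\`es que $|\epsilon|\leqslant\epsilon^{*}$, avec $x^{\ast}(\epsilon)\to x^{\ast}$ quand $\epsilon\to0$, et --- par stabilit\'e asymptotique globale de $x^{\ast}$ pour $\varDelta_{0}$, comme en (\ref{eq:10}) --- d'un temps fini $T=T_{\delta}$ ind\'ependant de $\epsilon$ tel que $x(x_{0},t)\in\bar{B}(x^{\ast},\delta/2)$ pour tout $x_{0}\in\varOmega$ et tout $t\geqslant T$, o\`u $\delta\in(0,\eta)$ est un petit rayon fix\'e ci-dessous.

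Le point d\'elicat est de pi\'eger \emph{uniform\'ement en $\epsilon$} les trajectoires du syst\`eme perturb\'e pr\`es de $x^{\ast}$ : je veux $\delta>0$ et $\epsilon_{1}>0$ tels que, pour $|\epsilon|\leqslant\epsilon_{1}$, si $x_{\epsilon}(x_{0},s)\in\bar{B}(x^{\ast},\delta)$ pour un certain $s\geqslant0$ alors $x_{\epsilon}(x_{0},t)\in B(x^{\ast},\eta/2)$ pour tout $t\geqslant s$. La stabilit\'e asymptotique locale de chaque $x^{\ast}(\epsilon)$ ne fournirait qu'un voisinage et une estimation d\'ependant de $\epsilon$ ; pour l'uniformit\'e j'exploiterai que $J_{0}=\mathrm{D}f(x^{\ast},0)$ est Hurwitz. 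En r\'esolvant l'\'equation de Lyapunov $J_{0}^{\top}P+PJ_{0}=-I$ avec $P=P^{\top}>0$, la fonction quadratique $V_{\epsilon}(y)=(y-x^{\ast}(\epsilon))^{\top}P(y-x^{\ast}(\epsilon))$ est, pour $|\epsilon|$ assez petit, strictement d\'ecroissante le long de $\varDelta_{\epsilon}$ sur une boule $B(x^{\ast},\rho)$ dont le rayon $\rho>0$ ne d\'epend pas de $\epsilon$ --- cela d\'ecoule de $J_{\epsilon}=J_{0}+O(\epsilon)$ (comme dans la preuve du th\'eor\`eme) et de ce que le reste de Taylor de $f(\cdot,\epsilon)$ en $x^{\ast}(\epsilon)$ est uniform\'ement petit en $\epsilon$ sur ce voisinage. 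Les lignes de niveau de $V_{\epsilon}$ contenues dans $B(x^{\ast},\rho)$ sont alors positivement invariantes et attir\'ees vers $x^{\ast}(\epsilon)$ ; en choisissant $\delta$ suffisamment petit devant $\rho$, $\eta$ et le conditionnement de $P$, puis en r\'etr\'ecissant $\epsilon_{1}$ pour que $\|x^{\ast}(\epsilon)-x^{\ast}\|\leqslant\delta$, on obtient le confinement voulu.

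Il reste \`a recoller. Par d\'ependance continue des solutions en $\epsilon$ sur le compact $[0,T]$ (m\^eme argument que pour (\ref{eq:11})), il existe $\epsilon_{2}>0$ tel que $\sup_{0\leqslant t\leqslant T}d(x_{\epsilon}(x_{0},t),x(x_{0},t))<\delta/2$ pour tout $|\epsilon|\leqslant\epsilon_{2}$ et tout $x_{0}\in\varOmega$. Je poserai $\epsilon^{**}=\min\{\epsilon^{*},\epsilon_{1},\epsilon_{2}\}$ et fixerai $|\epsilon|\leqslant\epsilon^{**}$, $x_{0}\in\varOmega$. Sur $[0,T]$ on a $d(x_{\epsilon}(x_{0},t),x(x_{0},t))<\delta/2<\eta$. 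Au temps $T$, l'in\'egalit\'e triangulaire donne $d(x_{\epsilon}(x_{0},T),x^{\ast})\leqslant d(x_{\epsilon}(x_{0},T),x(x_{0},T))+d(x(x_{0},T),x^{\ast})<\delta/2+\delta/2=\delta$, donc $x_{\epsilon}(x_{0},T)\in\bar{B}(x^{\ast},\delta)$ ; par le paragraphe pr\'ec\'edent $x_{\epsilon}(x_{0},t)\in B(x^{\ast},\eta/2)$ pour tout $t\geqslant T$, tandis que $x(x_{0},t)\in\bar{B}(x^{\ast},\delta/2)\subset B(x^{\ast},\eta/2)$ pour ces m\^emes $t$. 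Une derni\`ere in\'egalit\'e triangulaire fournit $d(x_{\epsilon}(x_{0},t),x(x_{0},t))<\eta$ pour tout $t\geqslant T$, ce qui ach\`evera l'argument. Toute la difficult\'e est ainsi concentr\'ee dans l'estimation de stabilit\'e locale uniforme en $\epsilon$ du deuxi\`eme paragraphe ; le reste n'est qu'une suite d'in\'egalit\'es triangulaires et la d\'ependance continue sur un intervalle compact, comme dans la preuve du th\'eor\`eme.
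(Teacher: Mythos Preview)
Your overall architecture --- splitting $[0,+\infty)$ into a compact interval $[0,T]$ handled by continuous dependence in $\epsilon$, and a tail $[T,+\infty)$ handled by triangle inequalities through $x^{\ast}$ --- is exactly that of the paper. The substantive difference lies in how the tail is controlled. The paper invokes directly the global asymptotic stability of $x^{\ast}(\epsilon)$ furnished by Theorem~\ref{thm:globalstab} to obtain a time $\theta_{2}(\eta)$ after which $d(x_{\epsilon}(x_{0},t),x^{\ast}(\epsilon))\leqslant\eta/3$, then combines this with $d(x^{\ast}(\epsilon),x^{\ast})\leqslant\eta/3$ and $d(x(x_{0},t),x^{\ast})\leqslant\eta/3$ via an $\eta/3$--$\eta/3$--$\eta/3$ split. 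You instead solve the Lyapunov equation for $J_{0}$ and use the resulting quadratic form to obtain a local trapping that is \emph{uniform in $\epsilon$}: once $x_{\epsilon}(x_{0},T)\in\bar{B}(x^{\ast},\delta)$, the perturbed trajectory never leaves $B(x^{\ast},\eta/2)$. Your route is slightly longer but has the merit of making the uniformity in $\epsilon$ explicit --- the paper simply asserts that a single $\theta_{2}$ works for all $|\epsilon|<\epsilon^{*}$, which is the delicate point you isolate and justify. Apart from this, both proofs reduce to the same continuous-dependence argument on $[0,T]$ and the same final triangle inequality.
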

\begin{proof}
Soit $\eta>0$ fixé. Puisque $x^{\ast}$ est un équilibre globalement
asymptotiquement stable pour $\varDelta_{0}$, il existe un temps
fini $\theta_{1}(\eta)$ tel que
\[
d(x(x_{0},t),\thinspace x^{\ast})\leqslant\frac{\eta}{3}\mbox{ pour tout }t\geqslant\theta_{1}(\eta).
\]
D\textquoteright après le Théorème \ref{thm:globalstab} il existe
un équilibre $x^{\ast}(\epsilon)$, $O(\epsilon)$-proche de $x^{\ast}$,
et donc il existe $\epsilon_{1}<\epsilon_{0}$ tel que pour tout $\epsilon\in[-\epsilon_{1},\epsilon_{1}]$ 

\[
d(x^{\ast}(\epsilon),\thinspace x^{\ast})\leqslant\frac{\eta}{3}.
\]
Le même théorème montre qu'il existe $\epsilon^{*}>0$ tel que $x^{\ast}(\epsilon)$
est globalement asymptotiquement stable pour $\varDelta_{\epsilon}$.
Il en résulte qu'il existe un temps fini $\theta_{2}(\eta)>0$ tel
que

\[
d(x_{\epsilon}(x_{0},t),\thinspace x^{\ast}(\epsilon))\leqslant\frac{\eta}{3}\mbox{ \,pour tous }t\geqslant\theta_{2}\mbox{ et }-\epsilon^{*}<\epsilon<\epsilon^{*}.
\]

\noindent Soit $T=\max\{\theta_{1},\theta_{2}\}$. Alors,

\noindent 
\begin{alignat*}{1}
d(x_{\epsilon}(x_{0},t),\thinspace x(x_{0},t))\leqslant & \,d(x_{\epsilon}(x_{0},t),\thinspace x^{\ast}(\epsilon))+d(x^{\ast}(\epsilon),\thinspace x^{\ast})+d(x(x_{0},t),\thinspace x^{\ast})\\
 & \leqslant\frac{\eta}{3}+\frac{\eta}{3}+\frac{\eta}{3}=\eta\,,\mbox{ }\forall t\geqslant T\mbox{ et }\left|\,\epsilon\,\right|<\min\{\epsilon_{1},\epsilon^{*}\}.
\end{alignat*}

\noindent Dans l'intervalle fini $[0,T]$, par la dépendance continue
de la solution par rapport aux paramètres \cite[Théorème 2, P 84]{Perko2013},
il existe $\epsilon_{2}<\epsilon_{0}$ tel que pour tout $\epsilon\in[-\epsilon_{2},\epsilon_{2}]$
\[
d(x_{\epsilon}(x_{0},t),\thinspace x(x_{0},t))<\eta.
\]
\end{proof}

\bibliographystyle{plain-fr}
\bibliography{references}

\end{document}